\newtheorem{theorem}{Theorem}[section]
\newtheorem{proposition}{Proposition}[section]
\newtheorem{lemma}{Lemma}[section]
\newtheorem{remark}{Remark}[section]
\newcommand{\R}{\mathbb{R}}
\newcommand{\e}{\varepsilon}
\newcommand{\ity}{\infty}
\begin{document}
\title[Semilinear damped wave equations]{Existence of global solutions to semilinear damped wave equations with nonlinearities of derivative type}

\subjclass{35A01, 35B33, 35L15, 35L71}
\keywords{Damped wave equations, Power nonlinearity of derivative type, Global existence}
\thanks{$^* $\textit{Corresponding author:} Tuan Anh Dao}

\maketitle
\centerline{\scshape \textbf{Dinh Van Duong}}
{\footnotesize
	\centerline{Faculty of Mathematics and Informatics, Hanoi University of Science and Technology}
	\centerline{No.1 Dai Co Viet road, Hanoi, Vietnam}
	\centerline{Email: vanmath2002@gmail.com}}
\medskip

\centerline{\scshape \textbf{Tuan Anh Dao}}
{\footnotesize
	\centerline{Faculty of Mathematics and Informatics, Hanoi University of Science and Technology}
	\centerline{No.1 Dai Co Viet road, Hanoi, Vietnam}
	\centerline{Email: anh.daotuan@hust.edu.vn}}

\begin{abstract}
    In this paper, we would like to consider the semi-linear damped wave equation with the power nonlinearity of derivative type $|u_t|^p$. The main contribution of this work is to improve the results for global (in time) solution existence in a comparison with the pioneering paper \cite{Matsumura1976} of Matsumura, who first established that the solutions exist globally for $p > 1$ ($n = 1$) and $p \ge 2$ ($n \ge 2$). More precisely, we have extended such a result for any $p > 1$ ($n = 1,2$) and $p > 3/2$ ($n = 3$). Our approach relies on constructing appropriately weighted solution spaces linked to the delicate application of several tools from Harmonic Analysis and Banach fixed-point theorem. 
\end{abstract}

\tableofcontents

\section{Introduction}
In this work, let us consider the Cauchy problem for semi-linear damped wave equations as follows:
\begin{equation} \label{Main.Eq.1}
\begin{cases}
u_{tt}(t,x) -\Delta u(t,x) + u_t(t,x)= |u_t(t,x)|^p, &\quad x\in \R^n,\, t > 0, \\
u(0,x) = \varepsilon u_0(x),\quad u_t(0,x) = \varepsilon u_1(x), &\quad x\in \R^n, \\
\end{cases}
\end{equation}
where $p > 1$ stands for the power exponent of nonlinearities and $\varepsilon > 0$ describes the size of initial data. The quantity $|u_t(t,x)|^p$ is called the power nonlinearity of derivative type. To understand our main motivation to investigate this problem more precisely, let us make a brief overview about the research history of the following Cauchy problem:
\begin{equation} \label{Main.Eq.2}
\begin{cases}
u_{tt}(t,x)-\Delta u(t,x)+ u_t(t,x)= F\big(u(t,x)\big), &\quad x\in \mathbb{R}^n,\, t > 0, \\
u(0,x) = \varepsilon u_0(x),\quad u_t(0,x) = \varepsilon u_1(x), &\quad x\in \R^n,
\end{cases}
\end{equation}
where $F(u)= F(u(t,x))$ presents a nonlinear function. Concerning (\ref{Main.Eq.2}) with $F(u) = 0$, well-known as the linear damped wave equations, Matsumura in \cite{Matsumura1976} was the first author to obtain some basic decay estimates. As a sequence, it has been established that the damped wave equation has a diffusive structure as $t \to \infty$. For this reason, several papers since then gave sharp $L^{q_1}-L^{q_2}$ estimates for solutions with $1 \leq q_1 \leq q_2 \leq \infty$, for example, \cite{Nishihara2003,DabbiccoEbert2017,Ikeda2019} and the references therein. Based on the study of the corresponding linear problem, it has attracted the interest of many mathematicians over the years in terms of taking into account the semi-linear problem (\ref{Main.Eq.2}) with $F(u) \ne 0$. To be specific, the authors in \cite{IkehataMiyaokaNakatake2004,IkehataOhta2002} focused on the nonlinear term like $F(u) = u|u|^{p-1}$, moreover, the authors in \cite{Todorova2001,IkehataTanizawa2005} studied the usual power nonlinearity $F(u) = |u|^p$. Among other things, one sees that under the additional regularity $L^1$ for the initial data, the authors in the cited papers showed that the Fujita exponent $p_{\rm Fuj}(n):= 1+2/n$ is the critical exponent of these problems by indicating that the small data solutions exist globally when $p > p_{\rm Fuj}(n)$, and the solutions blow-up in finite time when $p < p_{\rm Fuj}(n)$. As a remaining case, Zhang in \cite{Zhang2001} determined that the value $p= p_{\rm Fuj}(n)$ belongs to the blow-up range for $F(u) = |u|^p$. From these observations, the critical exponent $p_{\rm Fuj}(n)$ is understood as a threshold between the global (in time) existence of small data solutions and the blow-up of solutions even for small data. Recently, Ebert-Girardi-Reissig in \cite{EbertGirardiReissig2020} explored \eqref{Main.Eq.2} with the more general nonlinear term $F(u) = |u|^{p_{\rm Fuj}(n)} \mu(|u|)$, where $\mu=\mu(u)$ is a modulus of continuity, and found out the sharp condition on $\mu(s)$ that separates the global existence case ($I_\mu< \ity$, the called \textit{Dini condition}) and the blow-up case ($I_\mu= \ity$, the so-called \textit{non-Dini condition}). Here the integral quantity $I_\mu$ of a modulus of continuity is given by
\begin{align*}
I_\mu:= \int_{0}^{c}\frac{\mu(s)}{s}\;ds
\end{align*}
with a sufficiently small constant $c>0$. Under the additional regularity $L^m$ for the initial data, with $m \in (1,2]$, the authors in \cite{IkehataOhta2002,DabbiccoEbert2017} and the references therein identified the critical exponent for \eqref{Main.Eq.2} as $p_{\rm crit}(n,m): = p_{\rm Fuj}(n/m) =  1 + 2m/n.$ However, they didn't provide any conclusion regarding the solution's properties when $p = p_{\rm crit}(n,m)$. Afterwards, Ikeda-Inui-Okamoto-Wakasugi in \cite{Ikeda2019} proved the existence of global mild solutions in this critical case for all $m \in (1,2]$ in low dimensional spaces along with some additional conditions. \medskip

Coming back to our main problem \eqref{Main.Eq.1}, we can recognize that the first paper \cite{Matsumura1976} from 1976 of Matsumura successfully proved the global (in time) existence of solutions for the entire range $p > 1$ ($n = 1$), but for the required condition $p \ge 2$ ($n \ge 2$). It is quite surprising that since then there seem not so many research papers concerning an improvement of \cite{Matsumura1976} in terms of the study of \eqref{Main.Eq.1} to the best of the author's knowledge. From this fact, we aim in this work to establish the bigger range for existence of global (in time) solutions to (\ref{Main.Eq.1}) depending on some dimensions, particularly, for all $p > 1$ ($n = 1,2$) and for $p > 3/2$ ($n = 3$). To do this, we need to construct the appropriately weighted solution spaces and effectively apply several key tools from Harmonic Analysis associated with the Banach fixed-point theorem. The point worth noticing is that the small data Sobolev solutions to \eqref{Main.Eq.1} exist globally for any $p>1$, which has never appeared in previous literature, at least in the low dimensional cases. Throughout this paper, one also observes that when the usual power nonlinearity $|u|^p$ is replaced by the power nonlinearity of derivative type $|u_t|^p$, the critical exponent shifts to the left in a comparison with the Fujita exponent. \medskip

\textbf{Notations:} We write $f\lesssim g$ when there exists a constant $C>0$ such that $f\leq Cg$, and $f \sim g$ when $g\lesssim f\lesssim g$. For any $\gamma \in \R$, we denote $[\gamma]^+:= \max\{\gamma,0\}$ and $\lceil \gamma \rceil := \min\{k \in \mathbb{Z}: k \geq \gamma\}$. As usual, $H^{a}_q$ and $\dot{H}^{a}_q$, with $q \in (1, \infty)$, $a \geq 0$, denote potential spaces based on $L^q$ spaces. Here $\big< \nabla\big>^{a}$ and $|\nabla|^{a}$ stand for the pseudo-differential operators with symbols $\big<\xi\big>^{a}$ and $|\xi|^{a}$, respectively, where the symbol $\langle x\rangle := \sqrt{|x|^2 +1} $ denotes the Japanese bracket. \medskip

Let us state the global (in time) existence of small data Sobolev solutions, which will be proved in this paper. \medskip
\begin{theorem}[\textbf{Main result}]\label{Theorem1}
     Let $n \geq 1$ and the power exponent $p$ fulfills the following condition:
    \begin{equation} \label{Condition_p}
         p > \max\left\{1,n/2\right\}.
     \end{equation} 
     Assume that $s \in \left(n/2, p\right)$ and the initial data belong to the class
     \begin{align*}
            (u_0, u_1) \in \mathcal{D}[n,s,\alpha] := (H^{s+1} \cap H^{\delta_\alpha+1}_\alpha \cap L^1)\times (H^{s} \cap H_\alpha^{\beta_{\alpha}} \cap L^1) 
            \end{align*}
            with the norm $\|(u_0, u_1)\|_{\mathcal{D}[n,s,\alpha]}:= \|u_0\|_{H^{s+1}}+ \|u_0\|_{H^{\delta_\alpha+1}_\alpha}+ \|u_0\|_{L^1}+ \|u_1\|_{H^s}+ \|u_1\|_{H^{\beta_\alpha}_\alpha}+ \|u_1\|_{L^1}$, where
            $\alpha := \min\{2, p\}, \,\,
        \beta_{\alpha} := (n-1)\left(1/\alpha-1/2\right), \,\, \delta_{\alpha} := n \left(1/\alpha-1/2\right).$ 
       Then, there exists a constant $\varepsilon_0 > 0$ such that for any $\varepsilon \in (0, \varepsilon_0]$, the Cauchy problem \eqref{Main.Eq.1} admits a unique global (in time) Sobolev solution 
        \begin{align*}
            u \in \mathcal{C}\big([0, \infty), L^2\big) \cap \mathcal{C}^1\big([0, \infty), H^s \cap L^\alpha\big).
        \end{align*}
      Furthermore, the following estimates hold for all $t > 0$:
        \begin{align*}
            \|u_t(t,\cdot)\|_{L^\alpha} &\lesssim \varepsilon (1+t)^{-\frac{n}{2}(1-\frac{1}{\alpha})-1} \|(u_0,u_1)\|_{\mathcal{D}[n,s,\alpha]},\\
            \|u_t(t,\cdot)\|_{\dot{H}^s} &\lesssim \varepsilon(1+t)^{-\frac{n}{4}-1-\frac{s}{2}} \|(u_0,u_1)\|_{\mathcal{D}[n,s,\alpha]}.
        \end{align*}
\end{theorem}

\begin{remark}
\fontshape{n}
\selectfont
    From the statement of Theorem \ref{Theorem1}, it follows that the Cauchy problem \eqref{Main.Eq.1} has a unique global (in time) Sobolev solution for all $p > 1$ in low dimensional spaces $n=1,2$. Moreover, the achieved condition \eqref{Condition_p} for $p$ tells us that the critical exponent shifts to the left in a comparison with the well-known Fujita exponent $p_{\rm Fuj}(n):= 1+2/n$ in terms of the study of semi-linear damped wave equations with the power nonlinearity of derivative type $|u_t|^p$ instead of the usual power nonlinearity $|u|^p$.
\end{remark}


    \section{Proof of the main result}
\subsection{Preliminaries}

To begin with, we can write the solutions to the corresponding linear problem of $(\ref{Main.Eq.2})$, i.e. with the vanishing right-hand side $F(u) \equiv 0$, by the formula
\begin{equation*}
u^{\rm lin}(t,x) = \varepsilon \big(\mathcal{K}(t,x) +\partial_t \mathcal{K}(t,x)\big)\ast_x u_0(x) + \varepsilon \mathcal{K}(t,x) \ast_x u_1(x),
\end{equation*}
where the Fourier transform of the kernel $\mathcal{K}(t,x)$ is defined by
\begin{align*}
    \widehat{\mathcal{K}}(t,\xi) = 
\displaystyle\frac{e^{-\frac{t}{2}}\sinh{\left(t \sqrt{\frac{1}{4} -|\xi|^2}\right)}}{\sqrt{\frac{1}{4}- |\xi|^2}}  \text{ if } |\xi| \leq \displaystyle\frac{1}{2}
\,\,\text{ and }\,\, \widehat{\mathcal{K}}(t, \xi) = \displaystyle\frac{e^{-\frac{t}{2}}\sin{\left(t \sqrt{|\xi|^2-\frac{1}{4}}\right)}}{\sqrt{|\xi|^2-\frac{1}{4}}} \text{ if } |\xi| > \displaystyle\frac{1}{2}.
\end{align*}
 So, thanks to Duhamel's principle, the solutions to $(\ref{Main.Eq.1})$ can be expressed by
\begin{align*}
    u(t,x)
    &= u^{\rm lin}(t,x) + u^{\rm non}(t,x), \quad\text{ where }\quad u^{\rm non}(t,x) := \int_0^t \mathcal{K}(t-\tau,x) \ast_x |u_t(\tau,x)|^p d\tau.
\end{align*}
Let $\chi_k= \chi_k(r)$ with $k\in\{\rm L,H\}$ be smooth cut-off functions having the following properties:
\begin{align*}
&\chi_{\rm L}(r)=
\begin{cases}
1 &\quad \text{ if }r\le \varepsilon^*/2, \\
0 &\quad \text{ if }r\ge \varepsilon^*
\end{cases}
\qquad \text{ and } \qquad
\chi_{\rm H}(r)= 1 -\chi_{\rm L}(r),
\end{align*}
where $\varepsilon^*$ is a sufficiently small, positive constant.
It is obvious to see that $\chi_{\rm H}(r)= 1$ if $r \geq \varepsilon^*$ and $\chi_{\rm H}(r)= 0$ if $r \le \varepsilon^*/2$.
Now, we recall the following ingredients for the kernel $\mathcal{K}(t,x)$, which play an essential role in our proof.

\begin{lemma}[Linear estimates]\label{LinearEstimates}
    Let $n \geq 1, \,
    1 \leq r \leq q < \infty$, $q \ne 1$ and $s_1 \geq s_2 \geq 0$.  Then, the following estimates hold for all $t > 0$ (see Theorem 1.1 in \cite{Ikeda2019}):
    \begin{align*}
        \big\| |\nabla|^{s_1} \mathcal{K}(t,x)\ast_x g(x)\big\|_{L^q} &\lesssim (1+t)^{-\frac{n}{2}(\frac{1}{r}-\frac{1}{q})-\frac{s_1-s_2}{2}} \big\||\nabla|^{s_2} \chi_{\rm L}(|\nabla|)g\big\|_{L^{r}}  + e^{-ct} \big\||\nabla|^{s_1}\chi_{\rm H}(|\nabla|) g\big\|_{H_{q}^{\beta_q-1}},\\
         \big\||\nabla|^{s_1} \partial_t \mathcal{K}(t,x)\ast_x g(x)\big\|_{L^q} &\lesssim (1+t)^{-\frac{n}{2}(\frac{1}{r}-\frac{1}{q})-\frac{s_1-s_2}{2}-1} \big\||\nabla|^{s_2} \chi_{\rm L}(|\nabla|)g\big\|_{L^{r}}  + e^{-ct} \big\||\nabla|^{s_1}\chi_{\rm H}(|\nabla|) g\big\|_{H_{q}^{\beta_q}},
        \end{align*}
        where $c$ is a suitable positive constant and
       $
            \beta_q := (n-1)|1/2-1/q|.
        $ Moreover, we also obtain the following estimate (see Propositions 4.1 and 4.2 in \cite{DabbiccoEbert2017}):
    \begin{align*}
        \big\||\nabla|^{s_1} \partial_t^2 \mathcal{K}(t,x) \ast_{x} g(x) \big\|_{L^{q}} \lesssim (1+t)^{-\frac{n}{2}(\frac{1}{r}-\frac{1}{q})-\frac{s_1}{2}-2} \|g\|_{L^{r}} + e^{-ct} \|g\|_{H_{q}^{\delta_q+s_1+1}},  
    \end{align*}
    where 
    $
        \delta_q := n |1/2-1/q|.
    $
\end{lemma}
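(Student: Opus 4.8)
The two estimates for $\mathcal{K}$ and $\partial_t\mathcal{K}$ are exactly the content of Theorem 1.1 in \cite{Ikeda2019}, and the estimate for $\partial_t^2\mathcal{K}$ is Propositions 4.1--4.2 in \cite{DabbiccoEbert2017}, so the plan is to reproduce the frequency-splitting argument underlying those references rather than to invent a new one. The starting point is the explicit symbol. Writing the characteristic roots of $\lambda^2+\lambda+|\xi|^2=0$ as $\lambda_\pm(\xi)=-\tfrac12\pm\sqrt{\tfrac14-|\xi|^2}$, one has $\widehat{\mathcal{K}}(t,\xi)=\big(e^{\lambda_+t}-e^{\lambda_-t}\big)/(\lambda_+-\lambda_-)$, which reproduces the $\sinh$/$\sin$ formulas in the two frequency ranges. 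I would split $\varphi=\chi_{\rm L}(|\nabla|)\varphi+\chi_{\rm H}(|\nabla|)\varphi$ and treat the two pieces by completely different mechanisms: a parabolic (heat-like) analysis for $|\xi|\le\varepsilon^*$ and an oscillatory (wave-like) analysis for $|\xi|\gtrsim\varepsilon^*$.

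First, the low-frequency part. For $|\xi|\to0$ one has $\sqrt{\tfrac14-|\xi|^2}=\tfrac12-|\xi|^2+O(|\xi|^4)$, so $\lambda_+(\xi)=-|\xi|^2+O(|\xi|^4)$ is the dominant (diffusive) mode while $\lambda_-(\xi)=-1+O(|\xi|^2)$ is exponentially damped. Thus $\widehat{\mathcal{K}}(t,\xi)$ factors as $e^{-|\xi|^2t}$ times a symbol that is smooth and bounded, plus a fast-decaying remainder. The decay $(1+t)^{-\frac{n}{2}(\frac1\rho-\frac1q)-\frac{s_1-s_2}{2}}$ is then the standard $L^\rho$--$L^q$ bound for the operator $|\nabla|^{s_1-s_2}e^{t\Delta}$ localized to low frequencies, obtained by writing the multiplier as a convolution kernel and invoking Young's inequality (together with Hardy--Littlewood--Sobolev when $\rho<q$); the leftover $\chi_{\rm L}$-localized symbol is checked to be a uniform-in-$t$ $L^\rho$--$L^q$ Fourier multiplier via a Mikhlin-type estimate in the dilated variable $\sqrt{t}\,\xi$. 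Differentiating in $t$ brings down a factor $\lambda_+\sim-|\xi|^2$, equivalent to acting by $|\nabla|^2$ on the heat flow and hence producing the extra decay $t^{-1}$ for $\partial_t\mathcal{K}$ and $t^{-2}$ for $\partial_t^2\mathcal{K}$, matching the stated exponents.

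Second, the high-frequency part. For $|\xi|\ge\varepsilon^*$ the global factor is $e^{-t/2}$, and the remaining symbol $e^{\pm it\sqrt{|\xi|^2-1/4}}$ is a Fourier integral operator of the same type as the half-wave propagator $e^{it|\nabla|}$, since $\sqrt{|\xi|^2-\tfrac14}=|\xi|+O(|\xi|^{-1})$ for large $|\xi|$. Here the genuine work lies in the Miyachi--Peral type $L^\rho$--$L^q$ bound for such operators, which on a frequency shell costs a loss of $\beta_q=(n-1)|\tfrac12-\tfrac1q|$ derivatives; this is precisely the source of the $H_q^{\beta_q-1}$ and $H_q^{\beta_q}$ norms, the relative $-1$ reflecting the extra factor $(|\xi|^2-\tfrac14)^{-1/2}$ present in $\mathcal{K}$ but absent after one time derivative. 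The prefactor $e^{-t/2}$ then upgrades the bound to the claimed $e^{-ct}$, and the restrictions $q\ne1$, $q<\infty$ enter exactly because these oscillatory multiplier estimates degenerate at the endpoints. For $\partial_t^2\mathcal{K}$ I would either repeat this analysis with one further $|\xi|$-factor (yielding one more derivative of loss, and the cruder full-dimensional count $\delta_q=n|\tfrac12-\tfrac1q|$ used in \cite{DabbiccoEbert2017}), or, more economically, use that $\mathcal{K}$ solves $\partial_t^2\mathcal{K}=\Delta\mathcal{K}-\partial_t\mathcal{K}$ to reduce the third estimate to the first two applied with $s_1$ replaced by $s_1+2$ in the Laplacian term.

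I expect the main obstacle to be the high-frequency oscillatory estimate: controlling the wave-type Fourier integral operator with the sharp derivative loss $\beta_q$ is the only step that is not a soft consequence of heat-kernel bounds and Young's inequality, and it is what forces the $(n-1)$ factor in $\beta_q$ (rather than $n$) as well as the endpoint exclusions. The low-frequency and time-derivative bookkeeping, by contrast, is routine once the symbol has been factored through $e^{-|\xi|^2t}$.
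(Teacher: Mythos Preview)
Your proposal is correct in outline, but note that the paper does not actually prove this lemma at all: it is stated as a direct import from Theorem~1.1 of \cite{Ikeda2019} and Propositions~4.1--4.2 of \cite{DabbiccoEbert2017}, with no argument given beyond the citations. What you have written is therefore not a competing proof but a faithful sketch of the very argument those references carry out --- the low/high frequency splitting, the diffusive expansion $\lambda_+(\xi)=-|\xi|^2+O(|\xi|^4)$ yielding heat-type $L^\rho$--$L^q$ decay at small frequencies, and the Miyachi--Peral oscillatory estimate producing the $(n-1)\big|\tfrac12-\tfrac1q\big|$ derivative loss at high frequencies with the exponential prefactor $e^{-t/2}$. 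Your identification of the high-frequency wave-type multiplier bound as the only nontrivial step, and your observation that $\partial_t^2\mathcal{K}=\Delta\mathcal{K}-\partial_t\mathcal{K}$ reduces the third estimate to the first two, are both accurate and match how \cite{DabbiccoEbert2017} proceeds. In short: the paper cites; you have sketched what is being cited, and done so correctly.
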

\begin{remark}
\fontshape{n}
\selectfont
The third estimate in Lemma \ref{LinearEstimates} is solely used to determine the required data space in Theorem \ref{Theorem1}. The remaining two estimates will be frequently applied to our proof in the next sequel.
\end{remark}

Under the assumptions of Theorem \ref{Theorem1},  we define the following function spaces for $T>0$:
\begin{align*}
    X(T) :=  L^{\infty}\big([0, T], H^s \cap L^{\alpha}\big)
\end{align*}
with the norm
\begin{align*}
    \|\varphi\|_{X(T)} := &\sup _{t \in [0,T]} \bigg\{(1+t)^{\frac{n}{2}(1-\frac{1}{\alpha})+1}\|\varphi(t,\cdot)\|_{L^{\alpha}} + (1+t)^{\frac{n}{4}+1+\frac{s}{2}} \| \varphi(t,\cdot)\|_{\dot{H}^s} \bigg\}
\end{align*}
and 
\begin{align*}
    Y(T) := L^{\infty}([0,T], L^2)
\end{align*}
with the norm
\begin{align*}
    \|\varphi\|_{Y(T)} := \sup_{t \in [0,T]
    }\big\{(1+t) \|\varphi(t,\cdot)\|_{L^2} \big\}.
\end{align*}
Moreover, we denote $X(T, M) := \{u \in X(T): \|u\|_{X(T)} \leq M \}$ for all $M > 0$ and introduce the function space
\begin{align*}
    Z(T) := \{\varphi \text{ is a first-order differentiable function on $[0,T]$ such that } \varphi_t \in X(T)\}.
\end{align*}
Next, we define the operator $\mathcal{N}$ on the space $Z(T)$ by
\begin{align}\label{Mapping1}
   \mathcal{N}[u](t,x) := u^{\rm lin}(t,x) + u^{\rm non}(t,x).
\end{align}
Following the proof of Lemma A.1 in \cite{Ikeda2019} with some minor modifications, we may conclude the following lemma.
\begin{lemma}\label{Lem2.2}
     For all $M > 0$, $X(T, M)$  is a closed subset of $Y(T)$ with respect to the metric $Y(T)$.
\end{lemma}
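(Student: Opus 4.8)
The plan is to show that $X(T,M)$, a bounded ball in the scale-invariant-type space $X(T)$, is closed as a subset of the coarser space $Y(T)$; the key point is that the finer $X(T)$-norm is controlled along Cauchy sequences in $Y(T)$ by a weak compactness / lower-semicontinuity argument rather than by any completeness of $X(T)$ itself. Concretely, I would take a sequence $(u_k)_{k\in\N} \subset X(T,M)$ converging to some $u$ in the metric of $Y(T)$, i.e. $\sup_{t\in[0,T]}(1+t)\|u_k(t,\cdot)-u(t,\cdot)\|_{L^2}\to 0$, and I must prove $u \in X(T,M)$, that is, $u\in X(T)$ with $\|u\|_{X(T)}\le M$.

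First I would fix $t\in[0,T]$ and observe that $u_k(t,\cdot)\to u(t,\cdot)$ in $L^2(\R^n)$, hence (along a subsequence, and then for the whole sequence once the limit is identified) almost everywhere and in $\mathcal{S}'(\R^n)$. Since $\|u_k(t,\cdot)\|_{\dot H^s}\le M(1+t)^{-\frac{n}{4}-1-\frac{s}{2}}$ uniformly in $k$, the sequence $\{|\nabla|^s u_k(t,\cdot)\}_k$ is bounded in $L^2$; by weak compactness of bounded sets in the Hilbert space $L^2$, a subsequence converges weakly, and the $\mathcal{S}'$-limit forces the weak limit to be $|\nabla|^s u(t,\cdot)$. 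Weak lower semicontinuity of the $L^2$-norm then gives $\|u(t,\cdot)\|_{\dot H^s}\le \liminf_k \|u_k(t,\cdot)\|_{\dot H^s}\le M(1+t)^{-\frac{n}{4}-1-\frac{s}{2}}$. For the $L^\alpha$-component: when $\alpha=2$ the same Hilbert-space argument applies verbatim; when $\alpha=p<2$, I would instead use that $u_k(t,\cdot)\to u(t,\cdot)$ a.e.\ (passing to a subsequence) together with the uniform bound $\|u_k(t,\cdot)\|_{L^\alpha}\le M(1+t)^{-\frac{n}{2}(1-\frac1\alpha)-1}$ and apply Fatou's lemma to conclude $\|u(t,\cdot)\|_{L^\alpha}\le M(1+t)^{-\frac{n}{2}(1-\frac1\alpha)-1}$. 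Combining the two bounds and taking the supremum over $t\in[0,T]$ yields $\|u\|_{X(T)}\le M$, so $u\in X(T,M)$.

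The main obstacle — and the only place where care is genuinely needed — is the bookkeeping of subsequences: the weak-compactness and Fatou arguments each extract a subsequence, and one must be sure that the limit obtained is always the \emph{same} $u$, namely the $Y(T)$-limit of the original sequence. This is handled by the uniqueness of limits in $\mathcal{S}'(\R^n)$: for each fixed $t$, every subsequence of $(u_k(t,\cdot))$ still converges to $u(t,\cdot)$ in $L^2$ and hence in $\mathcal{S}'$, so any distributional limit extracted from any subsequence must coincide with $u(t,\cdot)$; therefore no ambiguity arises and the bounds above hold for the genuine limit. A secondary technical point is measurability of $t\mapsto \|u(t,\cdot)\|_{\dot H^s}$ and $t\mapsto\|u(t,\cdot)\|_{L^\alpha}$ so that the suprema make sense, which follows because these are $\liminf$'s of continuous (indeed, bounded measurable) functions of $t$. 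Since all of this is exactly the argument of Lemma A.1 in \cite{Ikeda2019} adapted to the present weights $(1+t)^{\frac{n}{2}(1-1/\alpha)+1}$ and $(1+t)^{\frac{n}{4}+1+s/2}$ and the $L^\alpha$-slot, I would simply carry it out with these modifications and conclude.
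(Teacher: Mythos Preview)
Your proposal is correct and follows precisely the route the paper indicates: the paper itself gives no detailed argument but simply invokes Lemma~A.1 of \cite{Ikeda2019} ``with some minor modifications,'' and your weak-compactness/lower-semicontinuity plus Fatou argument is exactly that lemma adapted to the present weights and the $L^\alpha$ component. There is nothing to add.
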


\subsection{Auxiliary estimates}
\begin{lemma}\label{Lemma1.2}
    Under the assumptions of Theorem \ref{Theorem1}, the following estimates hold for all $u \in Z(T)$, $\tau > 0$ and $\sigma \geq 1$:
    \begin{align}
        \big\| |u_t(\tau,\cdot)|^p\big\|_{L^{\sigma}} &\lesssim (1+\tau)^{-p-\frac{n}{2}(p-\frac{1}{\sigma})} \|u_t\|_{X(T)}^p,\label{Es.Lemma1.1}\\ 
        \big\||u_t(\tau,\cdot)|^p\big\|_{\dot{H}_{\alpha}^{\beta_\alpha}} &\lesssim (1+\tau)^{-p-\frac{n}{2}(p-\frac{1}{\alpha})-\frac{\beta_\alpha}{2}} \|u_t\|_{X(T)}^p,\label{Es.Lemma1.2}\\
        \big\| |u_t(\tau,\cdot)|^p\big\|_{\dot{H}^s} &\lesssim (1+\tau)^{-p-\frac{n}{2}(p-\frac{1}{2})-\frac{s}{2}+\kappa} \|u_t\|_{X(T)}^p,\label{Es.Lemma1.3}
    \end{align}
    where $\kappa$ is an arbitrarily small positive constant.
\end{lemma}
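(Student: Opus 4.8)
The plan is to reduce every left-hand side to products of the only two quantities of $u_t$ that $\|\cdot\|_{X(T)}$ controls at a fixed time — $\|u_t(\tau,\cdot)\|_{L^\alpha}$ and $\|u_t(\tau,\cdot)\|_{\dot H^s}$ — and then to read off the decay rates directly from the weights in the $X(T)$-norm. The two tools are the (fractional) Gagliardo--Nirenberg inequality, which estimates an intermediate norm $\|u_t(\tau,\cdot)\|_{\dot H^\gamma_q}$ by $\|u_t(\tau,\cdot)\|_{L^\alpha}^{1-\theta}\,\|u_t(\tau,\cdot)\|_{\dot H^s}^{\theta}$ for an admissible pair $(\theta,q)$, and the fractional chain rule for $v\mapsto|v|^p$: for $p>\lceil\gamma\rceil$ and $\tfrac1q=\tfrac{p-1}{q_1}+\tfrac1{q_2}$ one has $\||v|^p\|_{\dot H^\gamma_q}\lesssim\|v\|_{L^{q_1}}^{p-1}\,\|v\|_{\dot H^\gamma_{q_2}}$. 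A fact I would record once and reuse is that, since $s>n/2$, Gagliardo--Nirenberg combined with the $X(T)$-weights gives
\[
\|u_t(\tau,\cdot)\|_{L^q}\ \lesssim\ (1+\tau)^{-\frac n2(1-\frac1q)-1}\,\|u_t\|_{X(T)}\qquad(\,\alpha\le q\le\infty\,),
\]
the interpolated decay exponent collapsing exactly to $-\frac n2(1-\frac1q)-1$ once the two weights are inserted.

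Granting this, \eqref{Es.Lemma1.1} follows by writing $\||u_t(\tau,\cdot)|^p\|_{L^\sigma}=\|u_t(\tau,\cdot)\|_{L^{p\sigma}}^p$, using the displayed bound with $q=p\sigma\ge\alpha$, and raising to the $p$-th power: $-p\big(\tfrac n2(1-\tfrac1{p\sigma})+1\big)=-p-\tfrac n2(p-\tfrac1\sigma)$. For \eqref{Es.Lemma1.2} and \eqref{Es.Lemma1.3} I would apply the fractional chain rule to $|u_t|^p$ with $(\gamma,q)=(\beta_\alpha,\alpha)$ and $(\gamma,q)=(s,2)$ respectively, then bound $\|u_t(\tau,\cdot)\|_{L^{q_1}}^{p-1}$ by the displayed estimate and $\|u_t(\tau,\cdot)\|_{\dot H^\gamma_{q_2}}$ by a (fractional) Gagliardo--Nirenberg interpolation of $L^\alpha$ and $\dot H^s$. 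A short computation shows that, for any admissible choice of the auxiliary exponents, the powers of $(1+\tau)$ add up to exactly the exponents claimed in \eqref{Es.Lemma1.2}--\eqref{Es.Lemma1.3}; this is forced by combining the relation $\tfrac1q=\tfrac{p-1}{q_1}+\tfrac1{q_2}$ with the Gagliardo--Nirenberg scaling identity. The hypotheses of Theorem \ref{Theorem1} are what make this run: $p>\max\{1,n/2\}$ keeps $(n/2,p)$ nonempty and forces $\beta_\alpha<1$, so in \eqref{Es.Lemma1.2} only a sub-unit amount of smoothness is ever differentiated and the chain rule is legitimate for all $p>1$; and $s<p$ lets the chain rule act on $s$ derivatives of $|u_t|^p$ in \eqref{Es.Lemma1.3}. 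Because one has only $p>s$ — not $p\ge\lceil s\rceil$ — the sub-integer part of $s$ in the last case must be handled by a fractional Leibniz step in which a merely Hölder-continuous factor of the form $|u_t|^{p-1}$ is placed in a Sobolev space of slightly lower order; this is admissible but loses an arbitrarily small amount of differentiability, which is the source of the $\kappa$ in \eqref{Es.Lemma1.3}.

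The step I expect to be the main obstacle is precisely the admissibility bookkeeping in \eqref{Es.Lemma1.2} and \eqref{Es.Lemma1.3}: one must produce auxiliary Lebesgue and Sobolev exponents that simultaneously satisfy the hypotheses of the fractional chain rule and of the Gagliardo--Nirenberg inequality, keeping all Lebesgue exponents inside $[\alpha,\infty]$ — the range in which $X(T)$ actually controls $u_t$ — and respecting the Gagliardo--Nirenberg smoothness constraint $\gamma\le\theta s$, uniformly over the parameters allowed by Theorem \ref{Theorem1}. Once such a choice is fixed, the decay rates are automatic and the proof reduces to the elementary algebra above.
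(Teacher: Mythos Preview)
Your plan for \eqref{Es.Lemma1.1} and \eqref{Es.Lemma1.2} is exactly the paper's: Gagliardo--Nirenberg between $L^\alpha$ and $\dot H^s$ for the first, and for the second the fractional chain rule (Proposition~\ref{chainrule}) followed by two Gagliardo--Nirenberg interpolations, with the observation $\beta_\alpha<1$ so that $p>\lceil\beta_\alpha\rceil$ is automatic. The paper even makes your choice of auxiliary exponents explicit, taking $1/r_1=\kappa/(p-1)$ and $1/r_2=1/\alpha-\kappa$.

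For \eqref{Es.Lemma1.3} you and the paper diverge. You correctly flag that the chain rule in the form of Proposition~\ref{chainrule} requires $p>\lceil s\rceil$, not merely $p>s$, and you propose to recover from this via a fractional Leibniz argument that trades an $\epsilon$ of smoothness --- this is where you locate the $\kappa$. The paper sidesteps the issue entirely by invoking a different composition estimate, Proposition~\ref{FractionalPowers}, which gives $\||u|^p\|_{\dot H^s}\lesssim\|u\|_{\dot H^s}\,\|u\|_{L^\infty}^{p-1}$ under the sole hypothesis $s\in(n/2,p)$ --- precisely the range fixed in Theorem~\ref{Theorem1}. The $\kappa$ then arises not from the composition step at all but from controlling $\|u_t(\tau,\cdot)\|_{L^\infty}$: one uses the embedding $\|v\|_{L^\infty}\lesssim\|v\|_{\dot H^{d}}+\|v\|_{\dot H^{s}}$ of Proposition~\ref{Embedding} with $d:=n/2-2\kappa/(p-1)<n/2<s$, and the $\dot H^{d}$ norm, interpolated between $L^\alpha$ and $\dot H^s$, decays like $(1+\tau)^{-1-n/2+\kappa/(p-1)}$; the $\kappa$ is the price of taking $d$ strictly below the critical value $n/2$. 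Your route may also close --- Leibniz-with-H\"older-factor arguments exist --- but the paper's is cleaner: it uses a ready-made black box tailored to exactly the hypothesis $n/2<s<p$, and localises the loss to the near-critical $L^\infty$ bound rather than to the differentiation of the nonlinearity.
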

\begin{proof}
    Using Proposition \ref{fractionalGagliardoNirenberg} we arrive at
    \begin{align*}
        \big\||u_t(\tau,\cdot)|^p\big\|_{L^\sigma} = \|u_t(\tau,\cdot)\|_{L^{p\sigma}}^p\lesssim \|u_t(\tau, \cdot)\|_{L^\alpha}^{p(1-\omega_0)} \|u_t(\tau, \cdot)\|_{\dot{H}^s}^{p\omega_0} \lesssim (1+\tau)^{-p-\frac{n}{2}(p-\frac{1}{\sigma})} \|u_t\|_{X(T)}^p,
    \end{align*}
    where we note that the following condition is satisfied due to $s> n/2$:
    \begin{align*}
       \omega_0 := \frac{\frac{1}{\alpha}-\frac{1}{p\sigma}}{\frac{1}{\alpha}-\frac{1}{2}+\frac{s}{n}} \in [0, 1].
    \end{align*}
   This leads to the estimate (\ref{Es.Lemma1.1}). Next, we prove the estimate (\ref{Es.Lemma1.2}). If $\beta_\alpha =0$, it is estimate (\ref{Es.Lemma1.1}) with $\sigma =\alpha$. Otherwise, we apply Propositions \ref{fractionalGagliardoNirenberg} and \ref{chainrule} with $p > n/2 > \lceil \beta_\alpha \rceil$ to obtain
   \begin{align*}
        \big\||u_t(\tau,\cdot)|^p\big\|_{\dot{H}_{\alpha}^{\beta_\alpha}} &\lesssim \|u_t(\tau,\cdot)\|_{L^{r_1}}^{p-1} \|u_t(\tau,\cdot)\|_{\dot{H}_{r_2}^{\beta_{\alpha}}}\lesssim \|u_t(\tau,\cdot)\|_{L^{\alpha}}^{(p-1)(1-\omega_1)+1-\omega_2} \|u_t(\tau,\cdot)\|_{\dot{H}^s}^{(p-1)\omega_1 +\omega_2}\\
       &\lesssim (1+\tau)^{-p-\frac{n}{2}(p-\frac{1}{\alpha})-\frac{\beta_\alpha}{2}}\|u_t\|_{X(T)}^p,
   \end{align*}
   provided that there exist parameters $r_1, r_2 > 1$ satisfying
   \begin{align*}
       \omega_1 := \dfrac{\frac{1}{\alpha}-\frac{1}{r_1}}{\frac{1}{\alpha}-\frac{1}{2}+\frac{s}{n}} \in [0,1] ; \quad \omega_2 := \dfrac{\frac{1}{\alpha}-\frac{1}{r_2}+\frac{\beta_\alpha}{n}}{\frac{1}{\alpha}-\frac{1}{2}+\frac{s}{n}} \in [0,1] \,\,\text{ and }\,\,
       \dfrac{1}{\alpha} = \dfrac{p-1}{r_1} +\dfrac{1}{r_2}.
   \end{align*}
   The fact is that we can choose
  $
       1/r_1 := \kappa/(p-1) \text{ and } 1/r_2 := 1/\alpha-\kappa
  $
   to verify these conditions by the aid of $\beta_\alpha < n/2 < s$. Therefore, the estimate (\ref{Es.Lemma1.2}) is proved. Finally, thanks to Propositions \ref{fractionalGagliardoNirenberg}, \ref{FractionalPowers} and \ref{Embedding} with 
   \begin{align}
    \frac{n}{2} -\frac{2\kappa}{p-1} =: d < \frac{n}{2} < s <p,\label{Para1}
    \end{align}
    we gain
   \begin{align*}
       \big\||u_t(\tau,\cdot)|^p\big\|_{\dot{H}^s} &\lesssim \|u_t(\tau,\cdot)\|_{\dot{H}^s} \left(\|u_t(\tau,\cdot)\|_{\dot{H}^{d}}+  \|u_t(\tau,\cdot)\|_{\dot{H}^s}\right)^{p-1}\lesssim (1+\tau)^{-p-\frac{n}{2}(p-\frac{1}{2})-\frac{s}{2}+\kappa} \|u_t\|_{X(T)}^p. 
   \end{align*}
   Then, we can conclude the estimate (\ref{Es.Lemma1.3}). This completes the proof of Lemma \ref{Lemma1.2}.
\end{proof}
\begin{proposition}\label{Pro1.1}
     Under the assumptions of Theorem \ref{Theorem1}, the following estimate holds for all $u \in Z(T)$:
     \begin{align*}
         \|\partial_t u^{\rm non}\|_{X(T)} \lesssim \|u_t\|_{X(T)}^p.
     \end{align*}
\end{proposition}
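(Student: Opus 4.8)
The plan is to differentiate Duhamel's formula and then feed the nonlinear bounds of Lemma~\ref{Lemma1.2} into the kernel estimates of Lemma~\ref{LinearEstimates}, tracking the two time weights that define $\|\cdot\|_{X(T)}$. First I would observe that $\widehat{\mathcal{K}}(0,\xi)=0$, hence $\mathcal{K}(0,\cdot)=0$, so differentiating the Duhamel integral produces no boundary term and $\partial_t u^{\rm non}(t,x)=\int_0^t (\partial_t\mathcal{K})(t-\tau,x)\ast_x |u_t(\tau,x)|^p\,d\tau$. By the definition of $\|\cdot\|_{X(T)}$ it then suffices to prove, for all $t>0$, that $\|\partial_t u^{\rm non}(t,\cdot)\|_{L^\alpha}\lesssim (1+t)^{-\frac n2(1-\frac1\alpha)-1}\|u_t\|_{X(T)}^p$ and $\|\partial_t u^{\rm non}(t,\cdot)\|_{\dot{H}^s}\lesssim (1+t)^{-\frac n4-1-\frac s2}\|u_t\|_{X(T)}^p$; in both cases I split $\int_0^t=\int_0^{t/2}+\int_{t/2}^t$ and use that $1+t-\tau\sim 1+t$ on the first interval while $1+\tau\sim 1+t$ on the second.

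For the $L^\alpha$ estimate I would apply the second inequality of Lemma~\ref{LinearEstimates} with $q=\alpha$. On $[0,t/2]$, taking $\rho=1$, $s_1=s_2=0$, the low-frequency part is $\lesssim(1+t)^{-\frac n2(1-\frac1\alpha)-1}\int_0^{t/2}\|\,|u_t(\tau,\cdot)|^p\|_{L^1}\,d\tau$, and this integral converges by \eqref{Es.Lemma1.1} with $\sigma=1$ since the exponent $-p-\frac n2(p-1)$ is strictly below $-1$ for $p>1$; the high-frequency part is $\lesssim e^{-ct/2}\int_0^{t/2}\|\,|u_t(\tau,\cdot)|^p\|_{H_\alpha^{\beta_\alpha}}\,d\tau$, again finite via \eqref{Es.Lemma1.1}--\eqref{Es.Lemma1.2} (the $\chi_{\rm L}(|\nabla|)$, $\chi_{\rm H}(|\nabla|)$ being bounded Fourier multipliers on the relevant spaces). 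On $[t/2,t]$, still with $\rho=1$, the factor $(1+t-\tau)^{-\frac n2(1-\frac1\alpha)-1}$ is integrable in $\tau$ and $1+\tau\sim 1+t$, so this piece is $\lesssim\big((1+t)^{-p-\frac n2(p-1)}+(1+t)^{-p-\frac n2(p-\frac1\alpha)}\big)\|u_t\|_{X(T)}^p$; a short computation using only $p>1$ and $\alpha=\min\{2,p\}$ shows that both of $p+\frac n2(p-1)$ and $p+\frac n2(p-\frac1\alpha)$ are $\ge\frac n2(1-\frac1\alpha)+1$, which closes this case.

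For the $\dot{H}^s$ estimate, on $[0,t/2]$ I would use Lemma~\ref{LinearEstimates} with $q=2$, $s_1=s$, $s_2=0$, $\rho=1$ (so $\beta_2=0$): the low-frequency term is $\lesssim(1+t)^{-\frac n4-\frac s2-1}\int_0^{t/2}\|\,|u_t(\tau,\cdot)|^p\|_{L^1}\,d\tau$ and the high-frequency term $\lesssim e^{-ct/2}\int_0^{t/2}\|\,|u_t(\tau,\cdot)|^p\|_{\dot{H}^s}\,d\tau$, both finite by \eqref{Es.Lemma1.1} and \eqref{Es.Lemma1.3}. The subtle piece is $\int_{t/2}^t$. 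Here I commute $|\nabla|^s$ through the convolution (it is a Fourier multiplier) and apply the second inequality of Lemma~\ref{LinearEstimates} to $(\partial_t\mathcal{K})(t-\tau,\cdot)\ast_x\big(|\nabla|^s|u_t(\tau,\cdot)|^p\big)$ with $q=\rho=2$, $s_1=s_2=0$; since $0\le\chi_{\rm L}\le1$ and $0\le\chi_{\rm H}\le1$, this bounds $\||\nabla|^s\partial_t u^{\rm non}$'s integrand$\|_{L^2}$ by $\big((1+t-\tau)^{-1}+e^{-c(t-\tau)}\big)\|\,|u_t(\tau,\cdot)|^p\|_{\dot{H}^s}$. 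Using $1+\tau\sim 1+t$, $\int_{t/2}^t(1+t-\tau)^{-1}\,d\tau\sim\log(1+t)$, and \eqref{Es.Lemma1.3}, this contributes $\lesssim(1+t)^{-p-\frac n2(p-\frac12)-\frac s2+\kappa}\log(2+t)\,\|u_t\|_{X(T)}^p$. The logarithm is absorbed because \eqref{Es.Lemma1.3} carries the free parameter $\kappa$: one has $p+\frac n2(p-\frac12)-\kappa>\frac n4+1$, equivalently $p>1+\kappa/(1+\tfrac n2)$, for every $p>1$ as soon as $\kappa<(p-1)(1+\tfrac n2)$, so the exponent $p+\frac n2(p-\frac12)+\frac s2-\kappa$ strictly exceeds $\frac n4+\frac s2+1$. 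Collecting the four contributions gives $\|\partial_t u^{\rm non}\|_{X(T)}\lesssim\|u_t\|_{X(T)}^p$.

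The main obstacle is precisely the near-diagonal region $\tau\approx t$ of the $\dot{H}^s$ estimate: there is no smallness of $t-\tau$ to exploit, and the required decay in $t$ is tight when $p$ is close to $1$. It is exactly the $\kappa$-gain in \eqref{Es.Lemma1.3} — which itself relies on $s<p$, hence on the standing hypotheses $p>\max\{1,n/2\}$ and $s\in(n/2,p)$ — that makes all $p>1$ admissible, in particular in dimensions $n=1,2$. Everywhere else the argument uses only $p>1$ and the relation $\alpha=\min\{2,p\}$, so no further case distinction between $p\ge 2$ and $1<p<2$ is needed.
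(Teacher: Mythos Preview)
Your argument is correct and follows essentially the same route as the paper: the same $[0,t/2]\cup[t/2,t]$ splitting, the same pairings of Lemma~\ref{LinearEstimates} with the three nonlinear bounds of Lemma~\ref{Lemma1.2}, and the same absorption of the $\log(1+t)$ from $\int_{t/2}^t(1+t-\tau)^{-1}d\tau$ into the $\kappa$-slack of \eqref{Es.Lemma1.3}. The only cosmetic difference is that on $[t/2,t]$ for the $L^\alpha$ estimate you keep $\rho=1$ and integrate the kernel factor $(1+t-\tau)^{-\frac n2(1-\frac1\alpha)-1}$ (which is $O(1)$ since $\alpha>1$), whereas the paper switches to $\rho=\alpha$ so that the kernel factor becomes $(1+t-\tau)^{-1}$; both choices close with the same final exponent.
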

     \begin{proof}
         Using Lemma \ref{Lemma1.2} combined with
         Lemma \ref{LinearEstimates} for $r = 1, q = \alpha$, $s_1 = s_2 =0$ for $\tau \in [0, t/2]$, and $r=q =\alpha, s_1=s_2 =0$ for $\tau \in (t/2, t]$ we derive
         \begin{align}
             \|\partial_t u^{\rm non}(t,\cdot)\|_{L^{\alpha}} &\lesssim \int_0^{t/2} (1+t-\tau)^{-\frac{n}{2}(1-\frac{1}{\alpha})-1} \big\| |u_t(\tau,\cdot)|^p\big\|_{L^1 \cap \dot{H}_\alpha^{\beta_\alpha}} d\tau \notag\\
             &\hspace{3cm}+ \int_{t/2}^t (1+t-\tau)^{-1} \big\||u_t(\tau,\cdot)|^p\big\|_{L^\alpha \cap \dot{H}^{\beta_\alpha}_\alpha} d\tau\notag\\
             &\lesssim (1+t)^{-\frac{n}{2}(1-\frac{1}{\alpha})-1} \|u_t\|_{X(T)}^p \int_0^{t/2} (1+\tau)^{-p-\frac{n}{2}(p-1)} d\tau \notag\\
             &\hspace{3cm}+ (1+t)^{-p-\frac{n}{2}(p-\frac{1}{\alpha})} \log(e+t) \|u_t\|_{X(T)}^p \notag\\
             &\lesssim (1+t)^{-\frac{n}{2}(1-\frac{1}{\alpha})-1} \|u_t\|_{X(T)}^p, \label{Main.Es.1}
         \end{align}
    where we can see that $
        -p-n(p-1)/2 < -1 \text{ for all } p >1.
    $
 Next, thanks to again Lemma \ref{Lemma1.2} combined with
         Lemma \ref{LinearEstimates} for $r = 1, q = 2$, $s_1 =s, s_2 =0$ for $\tau \in [0, t/2]$, and $r=q =2, s_1=s_2 =s$ for $\tau \in (t/2, t]$, we obtain
    \begin{align}
        \|\partial_t u^{\rm non}(t,\cdot)\|_{\dot{H}^s} &\lesssim \int_0^{t/2} (1+t-\tau)^{-\frac{n}{4}-1-\frac{s}{2}} \big\||u_t(\tau,\cdot)|^p\big\|_{L^1 \cap \dot{H}^s} d\tau +\int_{t/2}^t (1+t-\tau)^{-1} \big\||u_t(\tau,\cdot)|^p\big\|_{\dot{H}^s} d\tau\notag\\
        &\lesssim (1+t)^{-\frac{n}{4}-1-\frac{s}{2}} \|u_t\|_{X(T)}^p. \label{Main.Es.2} 
    \end{align}
    Combining both the estimates (\ref{Main.Es.1}) and (\ref{Main.Es.2}), we have completed the proof of Proposition \ref{Pro1.1}.
     \end{proof}

\begin{proposition}\label{Pro1.2}
    Under the assumptions of Theorem \ref{Theorem1}, the following estimate holds for all $u,\,v \in Z(T)$:
    \begin{align*}
        \|\partial_t\mathcal{N}[u]-\partial_t\mathcal{N}[v]\|_{Y(T)} \lesssim \|u_t-v_t\|_{Y(T)}\left(\|u_t\|_{X(T)}^{p-1} + \|v_t\|_{X(T)}^{p-1}\right).
    \end{align*}
\end{proposition}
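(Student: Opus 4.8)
The plan is to mimic the proof of Proposition \ref{Pro1.1}, replacing the pointwise power estimates of Lemma \ref{Lemma1.2} by their Lipschitz (difference) analogues. Since $\partial_t u^{\rm lin}$ is independent of the nonlinear term, $\partial_t \mathcal{N}[u]-\partial_t\mathcal{N}[v]=\partial_t u^{\rm non}-\partial_t v^{\rm non}=\int_0^t \partial_t\mathcal{K}(t-\tau,\cdot)\ast_x\big(|u_t(\tau,\cdot)|^p-|v_t(\tau,\cdot)|^p\big)\,d\tau$. The elementary inequality
\begin{align*}
    \big||a|^p-|b|^p\big|\lesssim |a-b|\big(|a|^{p-1}+|b|^{p-1}\big),\qquad p>1,
\end{align*}
reduces everything to estimating $\big\||u_t-v_t|(|u_t|^{p-1}+|v_t|^{p-1})\big\|_{L^\sigma}$. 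By Hölder's inequality with exponents chosen as in Lemma \ref{Lemma1.2} (writing $\tfrac1\sigma=\tfrac1{\sigma_0}+\tfrac{p-1}{\sigma_1}$), this is bounded by $\|u_t-v_t\|_{L^{\sigma_0}}\big(\|u_t\|_{L^{\sigma_1}}^{p-1}+\|v_t\|_{L^{\sigma_1}}^{p-1}\big)$, and then the Gagliardo–Nirenberg interpolation of Proposition \ref{fractionalGagliardoNirenberg} converts these into the $X(T)$- and $Y(T)$-norms. Concretely, since the target norm is $Y(T)$, I only need the $L^2$ estimate of the difference, so I would establish
\begin{align*}
    \big\||u_t(\tau,\cdot)|^p-|v_t(\tau,\cdot)|^p\big\|_{L^1\cap L^2}\lesssim (1+\tau)^{-p-\frac{n}{2}(p-1)}\,\|u_t-v_t\|_{Y(T)}\big(\|u_t\|_{X(T)}^{p-1}+\|v_t\|_{X(T)}^{p-1}\big),
\end{align*}
interpolating the difference $u_t-v_t$ in $L^2$ (controlled by $Y(T)$) against $u_t,v_t$ in $H^s\cap L^\alpha$ (controlled by $X(T)$); one must check the relevant interpolation parameter lies in $[0,1]$, which follows from $s>n/2$ exactly as in the proof of Lemma \ref{Lemma1.2}.

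With these difference estimates in hand, I would split the Duhamel integral at $\tau=t/2$ and apply Lemma \ref{LinearEstimates} to $\partial_t\mathcal{K}$ with $s_1=s_2=0$, $q=2$: on $[0,t/2]$ take $\rho=1$, gaining the decay factor $(1+t-\tau)^{-n/4-1}\sim(1+t)^{-n/4-1}$ times the $L^1$-norm of the difference, whose $\tau$-integral converges because $-p-\tfrac n2(p-1)<-1$ for $p>1$; on $(t/2,t]$ take $\rho=q=2$, producing $(1+t-\tau)^{-1}$ times the $L^2$-norm of the difference, which integrates to a logarithmic-free bound $(1+t)^{-p-\frac n2(p-1)}\log\cdots$ absorbed into $(1+t)^{-1}$ since $p+\tfrac n2(p-1)>1$. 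In both regimes the resulting time weight is at least $(1+t)^{-1}$, which is exactly the weight in the $Y(T)$-norm, so multiplying through by $(1+t)$ and taking the supremum yields the claimed bound. (The high-frequency exponentially decaying pieces in Lemma \ref{LinearEstimates} are harmless and handled by the same interpolation, as usual.)

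The main obstacle is purely bookkeeping rather than conceptual: one must verify that the Gagliardo–Nirenberg exponents for the \emph{difference} interpolation — now mixing an $L^2$ factor (from $Y(T)$) with $L^\alpha$ and $\dot H^s$ factors (from $X(T)$) on the remaining $p-1$ copies — all land in $[0,1]$ and that the Hölder splitting $\tfrac1\sigma=\tfrac1{\sigma_0}+\tfrac{p-1}{\sigma_1}$ is compatible with admissible Sobolev exponents; this is where the hypotheses $p>\max\{1,n/2\}$ and $s\in(n/2,p)$ are used, just as in Lemma \ref{Lemma1.2}. Everything else is a direct transcription of the already-proven Proposition \ref{Pro1.1} with $|u_t|^p$ replaced by $|u_t|^p-|v_t|^p$ and $\|u_t\|_{X(T)}^p$ replaced by $\|u_t-v_t\|_{Y(T)}(\|u_t\|_{X(T)}^{p-1}+\|v_t\|_{X(T)}^{p-1})$.
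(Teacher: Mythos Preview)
There is a genuine gap in your $L^1$ estimate for the difference, and it is not mere bookkeeping. After the pointwise Lipschitz bound and H\"older's inequality, the factor $u_t-v_t$ can only be placed in $L^2$ --- that is all the $Y(T)$-norm controls --- which forces the remaining $p-1$ copies of $u_t$ (or $v_t$) into $L^{2(p-1)}$. But for $1<p<2$ one has $\alpha=p$ and $2(p-1)<p=\alpha$, so the Gagliardo--Nirenberg interpolation between $L^\alpha$ and $\dot H^s$ (the two pieces supplied by $X(T)$) cannot reach $L^{2(p-1)}$: the interpolation parameter would be negative. Hence your claimed bound on $\||u_t|^p-|v_t|^p\|_{L^1}$ simply fails throughout the range $1<p<2$, which is exactly the regime that gives Theorem~\ref{Theorem1} its content in dimensions $n=1,2$. (Even for $p\ge 2$ your stated decay exponent $-p-\tfrac n2(p-1)$ for the $L^1$ norm is off by $+\tfrac n4$, because the difference sits in $L^2$ rather than $L^\alpha$; this is harmless there, but it signals that the situation is not a straight transcription of Lemma~\ref{Lemma1.2}.)

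The paper's proof circumvents this obstruction by \emph{not} using $\rho=1$. It applies Lemma~\ref{LinearEstimates} with $\rho=m\in(1,2)$, $q=2$, throughout the whole Duhamel integral (no splitting at $t/2$). Writing $\tfrac1m=\tfrac12+\tfrac1\gamma$ and choosing $0<\tfrac1\gamma<\min\{\tfrac12,\tfrac{p-1}{\alpha}\}$ ensures $\gamma(p-1)>\alpha$, so that H\"older gives
\[
\||u_t|^p-|v_t|^p\|_{L^m}\lesssim \|u_t-v_t\|_{L^2}\big(\|u_t\|_{L^{\gamma(p-1)}}^{p-1}+\|v_t\|_{L^{\gamma(p-1)}}^{p-1}\big)
\]
with the second factor now in the admissible range for interpolation from $X(T)$. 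For the $L^2$ piece (needed for the high-frequency part of the kernel), the paper uses the embedding of Proposition~\ref{Embedding} to bound $\|u_t\|_{L^\infty}$ directly and writes $\||u_t|^p-|v_t|^p\|_{L^2}\lesssim\|u_t-v_t\|_{L^2}\|u_t\|_{L^\infty}^{p-1}$. The key idea you are missing is thus to trade the endpoint $\rho=1$ for a slightly weaker $(m,2)$ smoothing so that the H\"older exponents remain compatible with $X(T)$ when $p$ is close to~$1$.
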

    \begin{proof}
        From the definition of $Y(T)$ combined with Lemma \ref{LinearEstimates} with $r = m \in (1,2)$, $q =2$ and $s_1=s_2=0$, we obtain
        \begin{align*}
            &\|\partial_t\mathcal{N}[u]-\partial_t\mathcal{N}[v]\|_{Y(T)}  \lesssim \sup_{t \in [0,T]} \left\{(1+t)\int_0^t (1+t-\tau)^{-\frac{n}{2}(\frac{1}{m}-\frac{1}{2})-1}\big\||u_t(\tau,\cdot)|^p-|v_t(\tau,\cdot)|^p\big\|_{L^m \cap L^2} d\tau\right\}.
        \end{align*}
    Thanks to H\"older's inequality and Proposition \ref{fractionalGagliardoNirenberg}, one has
    \begin{align*}
        \big\||u_t(\tau,\cdot)|^p-|v_t(\tau,\cdot)|^p\big\|_{L^m} &\lesssim \|u_t(\tau,\cdot)-v_t(\tau,\cdot)\|_{L^2} \left(\|u_t(\tau,\cdot)\|_{L^{\gamma(p-1)}}^{p-1} + \|u_t(\tau,\cdot)\|_{L^{\gamma(p-1)}}^{p-1}\right)\\
        &\lesssim (1+\tau)^{-p-\frac{n}{2}(p-\frac{1}{m}-\frac{1}{2})} \|u_t-v_t\|_{Y(T)}\left(\|u_t\|_{X(T)}^{p-1} + \|v_t\|_{X(T)}^{p-1}\right),
    \end{align*}
    provided that the following conditions are satisfied:
    \begin{align*}
     m \in (1,2),\quad  \frac{1}{m}=\frac{1}{2}+\frac{1}{\gamma} \quad\text{ and } \quad \frac{\frac{1}{\alpha}-\frac{1}{\gamma(p-1)}}{\frac{1}{\alpha}-\frac{1}{2}+\frac{s}{n}} \in [0, 1].
    \end{align*}
    Therefore, we can choose the parameter $\gamma$ fulfilling
    $0 < 1/\gamma < \min\left\{1/2, (p-1)/\alpha\right\} $
    to verify all above conditions. Next, using again (\ref{Para1}), Propositions \ref{fractionalGagliardoNirenberg} and \ref{Embedding}, one finds
    \begin{align*}
        \|u_t(\tau,\cdot)\|_{L^\infty} &\lesssim \|u_t(\tau,\cdot)\|_{\dot{H}^{d}}+  \|u_t(\tau,\cdot)\|_{\dot{H}^s} \lesssim (1+\tau)^{-1-\frac{n}{2}+\frac{\kappa}{p-1}} \|u_t\|_{X(T)}.
    \end{align*}
    Thus, it follows that
    \begin{align*}
        \big\||u_t(\tau,\cdot)|^p-|v_t(\tau,\cdot)|^p\big\|_{L^2}
        &\lesssim (1+\tau)^{-p-\frac{n}{2}(p-1)+\kappa} \|u_t-v_t\|_{Y(T)} \left(\|u_t\|_{X(T)}^{p-1} + \|v_t\|_{X(T)}^{p-1}\right).
    \end{align*}
    Summarizing, we obtain the following estimate:
    \begin{align*}
        \|\partial_t\mathcal{N}[u]-\partial_t\mathcal{N}[v]\|_{Y(T)} &\lesssim \sup_{t \in [0,T]} \bigg\{(1+t) \int_0^t (1+t-\tau)^{-\frac{n}{2}(\frac{1}{m}-\frac{1}{2})-1}(1+\tau)^{-p-\frac{n}{2}(p-\frac{1}{m}-\frac{1}{2})} d\tau\bigg\} \\
        & \qquad \times\|u_t-v_t\|_{Y(T)}\left(\|u_t\|_{X(T)}^{p-1} + \|v_t\|_{X(T)}^{p-1}\right),\\
        &\lesssim \|u_t-v_t\|_{Y(T)}\left(\|u_t\|_{X(T)}^{p-1} + \|v_t\|_{X(T)}^{p-1}\right).
    \end{align*}
    Here, the fact $m \in (1,2)$ and $p-1>1/\gamma$, it implies
    \begin{align*}
     -p-\frac{n}{2}\left(p-\frac{1}{m}-\frac{1}{2}\right) < -1. 
     \end{align*}
    All in all, we have established the proof of Proposition \ref{Pro1.2}.
 \end{proof}

\subsection{Proof of Theorem \ref{Theorem1}} 
At the first step, using again Lemma \ref{LinearEstimates}, we have $
    \|\partial_t u^{\rm lin}\|_{X(T)} \lesssim \varepsilon\|(u_0, u_1)\|_{\mathcal{D}[n,s,\alpha]}
$
for all $T > 0$. By \eqref{Mapping1}, linking this to Propositions \ref{Pro1.1} and \ref{Pro1.2} we obtain the following estimates for all $u,\,v \in Z(T)$:
    \begin{align}
        \|\partial_t \mathcal{N}[u]\|_{X(T)} &\leq C_1 \varepsilon \|(u_0, u_1)\|_{\mathcal{D}[n,s,\alpha]} + C_1\|u_t\|_{X(T)}^p, \label{Es.Pro2.1}\\
        \|\partial_t\mathcal{N}[u]-\partial_t\mathcal{N}[v]\|_{Y(T)} &\leq C_2\|u_t-v_t\|_{Y(T)}\left(\|u_t\|_{X(T)}^{p-1}+\|v_t\|_{X(T)}^{p-1}\right).\label{Es.Pro2.2}
    \end{align}
    Let us fix 
    $
    M := 2C_1 \|(u_0, u_1)\|_{\mathcal{D}[n,s,\alpha]}
    $
    and the parameter $\varepsilon_0$ such that
    $
        \max\{C_1, 2C_2\}M^{p-1}\varepsilon_0^{p-1} \leq 1/2.
   $
   Next, taking the recurrence sequence $\{u_j\}^\ity_{j=0} \subset Z(T)$ with $u_0 = 0;\,  u_{j} = \mathcal{N}[u_{j-1}]$ for $j \ge 1$, we employ the estimate (\ref{Es.Pro2.1}) to have
   $\|\partial_t u_j\|_{X(T)} \leq M\varepsilon$
   for all $j \in \mathbb{N}$ and $\varepsilon \in (0, \varepsilon_0]$. 
Moreover, using the estimate (\ref{Es.Pro2.2}) one has
\begin{align*}
    \|\partial_t u_{j+1}-\partial_t u_j\|_{Y(T)} &\leq C_2M^{p-1}\varepsilon^{p-1} \|\partial_t u_j- \partial_t u_{j-1}\|_{Y(T)}\leq \frac{1}{2} \|\partial_t u_{j}-\partial_t u_{j-1}\|_{Y(T)},
\end{align*}
so that $\{\partial_t u_j\}^\ity_{j=0} \subset X(T, M\varepsilon)$ is a Cauchy sequence in the Banach
space $Y(T)$. As a consequence, combining this with Lemma \ref{Lem2.2} we claim that there exists a function $\varphi \in X(T, M\varepsilon)$ satisfying
$
     \partial_t u_j \to \varphi \text{ in } X(T, M\varepsilon)
$
as $j \to \infty$ with the  metric $Y(T)$.
Next, let us consider the following function:
\begin{align*}
    u^{*}(t,x) =  u_0(x)+ \int_0^t \varphi(\tau,x) d\tau.
\end{align*}
Additionally, one has
\begin{align*}
    \lim_{t \to t_0} \|u^*(t,\cdot)-u^*(t_0,\cdot)\|_{L^2} \lesssim \lim_{t \to t_0} \left|\int_{t_0}^t \|\varphi(\tau,\cdot)\|_{X(T)} d\tau\right| \leq M\varepsilon\lim_{t \to t_0}  |t-t_0| = 0
\end{align*}
for all $t_0 \in [0,T]$. From this, we obtain
$
    u^* \in \mathcal{C}\big([0, T], L^2\big).
$
Moreover, thanks to again the estimate (\ref{Es.Pro2.2}), it holds 
\begin{align*}
    \|\partial_t u_{j+1} - \partial_t \mathcal{N}[u^*]\|_{Y(T)} &\leq C_2\|\partial_t u_j - \varphi\|_{Y(T)}\Big(\|\partial_t u_j\|_{X(T)}^{p-1}+\|\varphi\|_{X(T)}^{p-1}\Big)\leq \frac{1}{2} \|\partial_t u_j - \varphi\|_{Y(T)},
\end{align*}
that is, 
$
    \partial_t u_j \to \partial_t \mathcal{N}[u^*]
$
as $j \to \infty$ with the  metric $Y(T)$. Therefore, we can immediately conclude
$
    \varphi \equiv \partial_t\mathcal{N}[u^*],
$
that is, there exists a function $\gamma(x)$ satisfies
\begin{align}\label{Eq1}
    u^*(t,x) = \gamma(x)+ \mathcal{N}[u^*](t,x)
\end{align}
for all $t \geq 0$ and $x 
\in \mathbb{R}^n$.
 Taking $t=0$ in (\ref{Eq1}), we can see that $\gamma(x) \equiv 0$ on $\mathbb{R}^n$ due to the fact $u^*(0,x) = u_0(x)$. For this reason, we have a solution $u^* = \mathcal{N}[u^*] \text{ in }\mathcal{C}\big([0,T], L^2\big)$ for all $T >0$. Since $T$ is arbitrary, we can conclude that $u^* \in \mathcal{C}\big([0,\infty), L^2\big)$.
 
In the second step, we need to show that
 $ u^* \in \mathcal{C}^1\big([0,\infty), H^s \cap L^{\alpha}\big), \text{ that is, }\varphi \in \mathcal{C}\big([0,\infty), H^s \cap L^{\alpha}\big). $
 To indicate this property, let us recall the relation
 \begin{align*}
     \varphi(t,x) = \partial_t \mathcal{N}[u^*](t,x)
     &= \partial_t u^{\rm lin}(t,x) +  \int_0^t \partial_t \mathcal{K}(t-\tau,x) \ast_x |\varphi(\tau,x)|^p d\tau.
 \end{align*}
 Since the linear part obviously satisfies the continuous property, it suffices to show that
\begin{align}
    \int_0^t \partial_t \mathcal{K}(t-\tau,x) \ast_x |\varphi(\tau,x)|^p d\tau \in \mathcal{C}\big([0,\infty), H^s \cap L^{\alpha}\big). \label{EQ2}
\end{align}
Noting that $\varphi \in X(\infty, M\varepsilon)$ and using again Lemmas \ref{LinearEstimates}-\ref{Lemma1.2}, we have the following estimates:
\begin{align*}
    \| \partial_t \mathcal{K}(t-\tau,x) \ast_x |\varphi(\tau,x)|^p\|_{L^{\alpha}} &\lesssim (1+\tau)^{-p-\frac{n}{2}(p-\frac{1}{\alpha})} \|\varphi\|_{X(\infty)}^p,\\
    \| \partial_t \mathcal{K}(t-\tau,x) \ast_x |\varphi(\tau,x)|^p\|_{\dot{H}^s} &\lesssim (1+\tau)^{-p-\frac{n}{2}(p-\frac{1}{2})-\frac{s}{2}+\kappa} \|\varphi\|_{X(\infty)}^p,
\end{align*}
where $\kappa$ is an arbitrarily small positive constant. Therefore, the Lebesgue convergence theorem in the Bochner integral immediately entails (\ref{EQ2}).

At the final step, we want to demonstrate the uniqueness of the global solution $u^*$ belonging to the space
\begin{align*}
 \mathcal{C}\big([0, \infty), L^2\big) \cap \mathcal{C}^1\big([0, \infty), H^s \cap L^\alpha\big). 
 \end{align*}
Indeed, let $u^*, v^*$ are solutions to (\ref{Main.Eq.1}) in this space. One recognizes that there exists a constant $M(T)$ fulfilling
$ \|\partial_t u^*\|_{X(T)}^{p-1}+ \|\partial_t v^*\|_{X(T)}^{p-1} \leq M(T). $ Following the same arguments as we did in the proof of Proposition \ref{Pro1.2} we obtain
 \begin{align*}
      \|\partial_t u^*-\partial_t v^*\|_{Y(t)} \lesssim M(T) \int_0^t \|\partial_t u^* -\partial_t v^*\|_{Y(\tau)}  d\tau
 \end{align*}
for all $t \in [0, T]$. From this, the
Gronwall inequality implies $ \partial_t u^* \equiv \partial_t v^*$ on $[0,T]$. Because $T $ is an arbitrary positive number, we claim that $\partial_t u^* \equiv \partial_t v^*$ on $[0, \infty)$. In addition, from the fact $u^*(0,x) = v^*(0,x) = u_0(x)$, this means that $u^* \equiv v^*$ on $[0, \infty)$. Hence, the proof of Theorem \ref{Theorem1} is established.

\section{Final remarks}
In this paper, we have succeeded in obtaining the global (in time) existence of small data Sobolev solutions to (\ref{Main.Eq.1}) in the one and two dimensional cases for all $p > 1$. Our aim in the future work is to study the following initial-boundary value problem for semi-linear damped wave
equations outside a closed unit ball, the so-called exterior domain $\Omega = \{x \in \mathbb{R}^n: |x| \ge 1\}$, with the power nonlinearity of derivative type $|u_t|^p$:
\begin{equation}\label{Eq4}
    \begin{cases}
       u_{tt}(t,x) -\Delta u(t,x) + u_t(t,x) = |u_t(t,x)|^p , &x \in \Omega, \, t > 0,\\
       u(t,x) = 0, &x \in \partial\Omega, \,t > 0,\\
       u(0,x) = \e u_0(x),\quad u_t(0,x) = \e u_1(x), &x \in \Omega.
    \end{cases}
\end{equation}
We can see that the authors in several previous works, for instance \cite{Ikeda2024, Ikehata2005, Ikehata2004, OgawaTakeda2009}, investigated \eqref{Eq4} with the usual power nonlinearity $|u|^p$ instead of $|u_t|^p$, and claimed that the critical exponent in $2$D is $p_{\rm crit} = 2 = p_{\rm Fuj}(2)$. This means that the critical exponents for semi-linear damped wave equations with the nonlinear term $|u|^p$ in both the whole space and the exterior domain really coincide at least in $2$D. From this fact, we conjecture that the same situation remains for \eqref{Eq4}, namely, we are able to show the global existence of small data Sobolev solutions for all $p > 1$ in some specific spatial dimensions. A positive answer for this problem will be partially given in our forthcoming work.



\appendix
\section{Some tools from Harmonic Analysis}

\begin{proposition}[Fractional Gagliardo-Nirenberg inequality, see \cite{Hajaiej2011}] \label{fractionalGagliardoNirenberg}
Let $1<q,\,q_1,\,q_2<\infty$, $s >0$ and $\theta\in [0,s)$. Then, it holds
\begin{align*}
 \|u\|_{\dot{H}^{\theta}_q} \lesssim \|u\|_{L^{q_1}}^{1-\omega(\theta,s)}\, \|u\|_{\dot{H}^{s}_{q_2}}^{\omega(\theta,s)}, 
 \end{align*}
where $\omega(\theta,s) =\dfrac{\frac{1}{q_1}-\frac{1}{q}+\frac{\theta}{n}}{\frac{1}{q_1}-\frac{1}{q_2}+\frac{s}{n}}$ and $\displaystyle\frac{\theta}{s}\leq \omega(\theta,s) \leq 1$.
\end{proposition}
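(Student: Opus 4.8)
The plan is to reduce the inequality to a family of elementary dyadic estimates via a homogeneous Littlewood--Paley decomposition. I would first record the scaling heuristic: under $u(\cdot)\mapsto u(\lambda\,\cdot)$ both sides are homogeneous in $\lambda$, and matching the two powers forces the exponent to be exactly the stated $\omega=\omega(\theta,a)$; hence it suffices to produce \emph{some} bound of the claimed shape. Writing $u=\sum_{j\in\Z}\Delta_j u$ for the pieces with frequencies $|\xi|\sim 2^{j}$, the triangle inequality in $L^{p}$ together with the frequency-localised multiplier bound $\||\nabla|^{\theta}\Delta_j u\|_{L^{p}}\sim 2^{j\theta}\|\Delta_j u\|_{L^{p}}$ gives
\begin{align*}
\|u\|_{\dot H^{\theta}_{p}}\;\lesssim\;\sum_{j\in\Z}2^{j\theta}\|\Delta_j u\|_{L^{p}},
\end{align*}
so the whole task becomes summing the right-hand side after splitting $\Z$ at an integer $N$ to be optimised.

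For the low frequencies $j\le N$ I would use Bernstein's inequality in the form $\|\Delta_j u\|_{L^{p}}\lesssim 2^{jn(1/p_0-1/p)}\|u\|_{L^{p_0}}$, and for the high frequencies $j>N$ first peel off $a$ derivatives and then apply Bernstein, $\|\Delta_j u\|_{L^{p}}\lesssim 2^{-ja}\||\nabla|^{a}\Delta_j u\|_{L^{p}}\lesssim 2^{j(n/p_1-n/p-a)}\|u\|_{\dot H^{a}_{p_1}}$. Putting $A:=\|u\|_{L^{p_0}}$, $B:=\|u\|_{\dot H^{a}_{p_1}}$, $\mu_0:=\theta+n/p_0-n/p$, $\mu_1:=\theta+n/p_1-n/p-a$, the two geometric series are controlled by their endpoint terms and yield $\|u\|_{\dot H^{\theta}_{p}}\lesssim A\,2^{N\mu_0}+B\,2^{N\mu_1}$. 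The hypothesis $\theta/a\le\omega\le1$ is precisely what makes $\mu_0\ge0\ge\mu_1$, which is what lets the two series converge on the respective halves, while $1<p_0,p_1,p<\infty$ keeps the Bernstein steps licit. I would then optimise over $N\in\R$ (rounding to an integer costs only a constant): balancing the terms at $2^{N(\mu_0-\mu_1)}\sim B/A$ gives $\|u\|_{\dot H^{\theta}_{p}}\lesssim A^{1-\lambda}B^{\lambda}$ with $\lambda=\mu_0/(\mu_0-\mu_1)$, and a one-line computation ($\mu_0-\mu_1=n(1/p_0-1/p_1+a/n)$, $\mu_0=n(1/p_0-1/p+\theta/n)$) identifies $\lambda=\omega(\theta,a)$, which is the asserted inequality.

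I expect the only genuinely delicate points to be the \emph{degenerate configurations} $\mu_0=0$ (equivalently $\omega=0$, forcing $\theta=0$ and $p=p_0$) and $\mu_1=0$ (equivalently $\omega=1$), where one of the geometric series stops being summable and the optimisation collapses: in the first case the bound is trivial, and in the second it is exactly the homogeneous Sobolev embedding $\dot H^{a}_{p_1}\hookrightarrow\dot H^{\theta}_{p}$, whose scaling relation is precisely $\mu_1=0$, so both are disposed of by hand; likewise, when one of $p_0,p_1$ exceeds $p$ the corresponding Bernstein step runs the other way and one interchanges the two halves. The remaining effort is the standard bookkeeping for homogeneous spaces (working modulo polynomials or with the canonical realisation, reducing to Schwartz functions by density, and justifying convergence of the dyadic decomposition). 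A fully equivalent and more robust alternative avoids the sign analysis altogether: realise $\dot H^{\theta}_{p}$ as the complex interpolation space $[L^{p_0},\dot H^{\theta/\omega}_{q}]_{\omega}$ with $q$ fixed by $1/q=\tfrac1\omega(1/p-1/p_0)+1/p_0$; since $\omega\ge\theta/a$ forces $\theta/\omega\le a$ and hence $p_1\le q$, the homogeneous Sobolev embedding $\dot H^{a}_{p_1}\hookrightarrow\dot H^{\theta/\omega}_{q}$ holds, and composing the interpolation inequality with this embedding yields the claim.
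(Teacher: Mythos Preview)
The paper does not prove this proposition at all: it is stated in the appendix purely as a quoted tool, with a pointer to Corollary~2.4 in \cite{Hajaiej2011}, and no argument is given. Your proposal therefore does something the paper never attempts, namely supply an actual proof. The Littlewood--Paley/Bernstein scheme you outline is one of the standard routes to fractional Gagliardo--Nirenberg, and your computations of $\mu_0,\mu_1$ and of the optimised exponent $\lambda=\mu_0/(\mu_0-\mu_1)=\omega(\theta,a)$ are correct; the complex-interpolation alternative you sketch is also correct and, as you note, is where the hypothesis $\omega\ge\theta/a$ is genuinely used (it is exactly the condition $p_1\le q$ needed for the embedding $\dot H^{a}_{p_1}\hookrightarrow\dot H^{\theta/\omega}_{q}$).

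Two small points worth tightening if you carry this out in full. First, your sentence ``$\theta/a\le\omega\le1$ is precisely what makes $\mu_0\ge0\ge\mu_1$'' overstates things: the sign conditions on $\mu_0,\mu_1$ are equivalent to $0\le\omega\le1$, not to the sharper lower bound $\omega\ge\theta/a$; the dyadic argument as written does not see that extra constraint, whereas your interpolation route does. Second, the Bernstein steps require $p_0\le p$ and $p_1\le p$, and your fix ``interchange the two halves'' is too vague to cover, say, the case $p_0>p$ and $p_1>p$ simultaneously; the clean way out is simply to default to the interpolation argument, which handles all sign configurations uniformly. None of this affects the substance of the proposal.
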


\begin{proposition}[Fractional powers, see \cite{Duong2015}] \label{FractionalPowers}
Let $p>1$, $1< q <\infty$, where $s \in \big(n/q,p\big)$. Let us denote by $F(u)$ one of the functions $|u|^p,\, \pm |u|^{p-1}u$. Then, the following estimates hold:
\begin{align*}
\|F(u)\|_{H^{s}_q}\lesssim \|u\|_{H^{s}_q}\,\, \|u\|_{L^\infty}^{p-1} \quad \text{ and }\quad \| F(u)\|_{\dot{H}^{s}_q}\lesssim \|u\|_{\dot{H}^{s}_q}\,\, \|u\|_{L^\infty}^{p-1}. 
\end{align*}
\end{proposition}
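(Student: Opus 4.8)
The plan is to reduce the two claimed estimates to a single homogeneous bound and then to peel off integer derivatives until only a fractional derivative of order $\le 1$ remains. First, the elementary pointwise inequality $|F(u)|\le|u|^{p}\le\|u\|_{L^\infty}^{p-1}|u|$ gives $\|F(u)\|_{L^{r}}\lesssim\|u\|_{L^\infty}^{p-1}\|u\|_{L^{r}}$, so by the norm equivalence $\|\cdot\|_{H^{s}_{r}}\sim\|\cdot\|_{L^{r}}+\|\cdot\|_{\dot{H}^{s}_{r}}$ it is enough to prove $\|F(u)\|_{\dot{H}^{s}_{r}}\lesssim\|u\|_{\dot{H}^{s}_{r}}\|u\|_{L^\infty}^{p-1}$ for $1<r<\infty$ and $n/r<s<p$. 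I set $k:=\lceil s\rceil-1\ge0$ and $\theta:=s-k\in(0,1]$; since $s<p$ one also has $k<p$. As $|\nabla|^{s}=|\nabla|^{\theta}|\nabla|^{k}$ and $|\nabla|^{k}$ is, up to Riesz transforms (bounded on $L^r$ for $1<r<\infty$), a combination of the derivatives $\partial^{\alpha}$ with $|\alpha|=k$, one gets $\|F(u)\|_{\dot{H}^{s}_{r}}\sim\sum_{|\alpha|=k}\|\,|\nabla|^{\theta}\partial^{\alpha}F(u)\|_{L^{r}}$, so the problem is recast as an estimate for a derivative of order $\theta\in(0,1]$.

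If $k=0$ the estimate follows quickly: $F$ is $C^{1}$ for every $p>1$ with $|F'(u)|\lesssim|u|^{p-1}$, so the fractional chain rule (for $C^{1}$ maps, with one factor kept in $L^\infty$) yields $\|\,|\nabla|^{\theta}F(u)\|_{L^{r}}\lesssim\|F'(u)\|_{L^\infty}\|\,|\nabla|^{\theta}u\|_{L^{r}}\lesssim\|u\|_{L^\infty}^{p-1}\|u\|_{\dot{H}^{s}_{r}}$. If $k\ge1$, I would expand each $\partial^{\alpha}F(u)$ by the Fa\`{a} di Bruno formula into a finite sum of terms $F^{(m)}(u)\,\partial^{\beta_{1}}u\cdots\partial^{\beta_{m}}u$ with $1\le m\le k$ and $|\beta_{1}|+\cdots+|\beta_{m}|=k$. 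Since $m\le k<p$ we have $p-m>0$, hence $F^{(m)}$ is continuous with $|F^{(m)}(u)|\lesssim|u|^{p-m}\le\|u\|_{L^\infty}^{p-m}$. To each summand I apply the fractional Leibniz (Kato--Ponce) inequality, distributing the $\theta$ derivatives so that they fall entirely on one factor, and estimating every remaining factor of the form $\|\,|\nabla|^{a}u\|_{L^{q}}$ by the Gagliardo--Nirenberg inequality (Proposition~\ref{fractionalGagliardoNirenberg}, in the version allowing the $L^\infty$ endpoint) between $\|u\|_{L^\infty}$ and $\|u\|_{\dot{H}^{s}_{r}}$; the hypothesis $s>n/r$ is precisely what keeps all the intermediate Lebesgue exponents in the admissible range. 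The summands in which the $\theta$ derivatives land on one of the $\partial^{\beta_{i}}u$ are then controlled by $\|u\|_{L^\infty}^{p-1}\|u\|_{\dot{H}^{s}_{r}}$ after checking that the powers of $\|u\|_{\dot{H}^{s}_{r}}$ add up to $1$ and those of $\|u\|_{L^\infty}$ to $p-1$.

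The genuinely delicate contribution is the term in which the $\theta$ derivatives are placed on $F^{(m)}(u)$, i.e.\ a factor $\|\,|\nabla|^{\theta}F^{(m)}(u)\|_{L^{q_{0}}}$. If $p-m\ge1$, then $F^{(m)}$ is $C^{1}$ with $|(F^{(m)})'(u)|\lesssim|u|^{p-m-1}$, and the $C^{1}$ fractional chain rule reduces this to $\|u\|_{L^\infty}^{p-m-1}\|\,|\nabla|^{\theta}u\|_{L^{q_{1}}}$, after which Gagliardo--Nirenberg closes the bound as above. If instead $0<p-m<1$ --- which, since $m\le k<p$, can occur only for the top index $m=k$, and which is exactly the regime $1<p<2$, $s>1$ relevant to dimension $n=2$ --- then $F^{(k)}$ is merely H\"older continuous of order $p-k$, and I would invoke the fractional chain rule for H\"older-continuous functions: for $0<\theta<p-k$ and a suitable $\tau\in(\theta,p-k)$ it gives $\|\,|\nabla|^{\theta}F^{(k)}(u)\|_{L^{q_{0}}}\lesssim\|u\|_{L^\infty}^{(p-k)-\theta/\tau}\|\,|\nabla|^{\tau}u\|_{L^{q_{1}}}^{\theta/\tau}$. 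The strict inequality $\theta=s-k<p-k$, that is, the hypothesis $s<p$, is exactly what makes this tool available. A last Gagliardo--Nirenberg interpolation of this factor together with the companion factors $\partial^{\beta_{i}}u$, with the exponents arranged so that the powers of $\|u\|_{\dot{H}^{s}_{r}}$ sum to $1$ and those of $\|u\|_{L^\infty}$ sum to $p-1$, finishes the bound; summing the finitely many summands gives the claim. The estimate for $\pm|u|^{p-1}u$ is identical, as $F$ and all its derivatives obey the same pointwise bounds.

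The main obstacle is precisely this low-power regime $1<p<2$ with $s>1$: there $F$ fails to be even $C^{1,1}$, so no integer Leibniz/chain-rule computation on $\dot{H}^{s}_{r}$ is available and one must rely on the H\"older fractional chain rule, whose applicability hinges on the sharp condition $s<p$. The secondary difficulty is bookkeeping --- verifying that in every application of Gagliardo--Nirenberg the intermediate exponents and the interpolation weights lie in the admissible ranges, which is where the assumption $s>n/r$ is used.
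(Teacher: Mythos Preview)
The paper does not prove this proposition: it is listed in the appendix as a borrowed tool, with the proof deferred to Proposition~42 and Corollary~43 of \cite{Duong2015}. There is therefore no argument in the present paper to compare yours against.

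Your outline is the standard route to such composition estimates and is sound in substance: reduce to the homogeneous bound via $\|F(u)\|_{L^{r}}\lesssim\|u\|_{L^\infty}^{p-1}\|u\|_{L^{r}}$, write $|\nabla|^{s}=|\nabla|^{\theta}|\nabla|^{k}$ with $k=\lceil s\rceil-1$, expand $\partial^{\alpha}F(u)$ by Fa\`a di Bruno, distribute the remaining $|\nabla|^{\theta}$ with Kato--Ponce, and close every factor by Gagliardo--Nirenberg between $\|u\|_{L^\infty}$ and $\|u\|_{\dot H^{s}_{r}}$. The hypothesis $s>n/r$ is precisely what keeps the intermediate Lebesgue exponents finite in those interpolations, and $s<p$ is exactly the condition $\theta<p-k$ that licenses the H\"older fractional chain rule on the least regular factor $F^{(k)}(u)$. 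This is essentially how the cited reference proceeds as well. One minor inaccuracy worth flagging: the H\"older regime $0<p-k<1$ is not confined to $1<p<2$; it occurs whenever $k=\lfloor p\rfloor$, e.g.\ $2<p<3$ with $k=2$. Your handling of that case does not actually use $k=1$, so the argument is unaffected, but the parenthetical remark should be dropped or corrected.
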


\begin{proposition}[A fractional Sobolev embedding, see \cite{Dao2019}] \label{Embedding}
Let  $1 < q < \infty$ and $0< s_1< n/q < s_2$. Then, it holds
\begin{align*}
 \|u\|_{L^\ity} \lesssim \|u\|_{\dot{H}^{s_1}_q}+ \|u\|_{\dot{H}^{s_2}_q}. 
 \end{align*}
\end{proposition}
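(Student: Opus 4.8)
The plan is to establish Proposition~\ref{Embedding} by a homogeneous Littlewood--Paley decomposition combined with Bernstein's inequality, handling the low- and high-frequency parts of $u$ separately according to the threshold $n/q$.

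First I would fix a dyadic partition of unity and write $u=\sum_{j\in\Z}\Delta_j u$, where $\Delta_j$ localizes the Fourier transform of $u$ to the annulus $|\xi|\sim 2^j$. For $1<q<\infty$ each $\Delta_j$ is bounded on $L^q$ uniformly in $j$ by the Mikhlin--H\"ormander multiplier theorem, and since on the frequency support of $\Delta_j$ the operator $|\nabla|^{s}$ acts as multiplication by a symbol comparable to $2^{js}$, one obtains for $s\in\{s_1,s_2\}$ the frequency-localized bound $\|\Delta_j u\|_{L^q}\lesssim 2^{-js}\,\big\|\,|\nabla|^{s}\Delta_j u\,\big\|_{L^q}\lesssim 2^{-js}\,\|u\|_{\dot{H}^{s}_q}$.

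Next I would invoke Bernstein's inequality: because $\Delta_j u$ has Fourier support in a ball of radius $\sim 2^j$, $\|\Delta_j u\|_{L^\infty}\lesssim 2^{jn/q}\|\Delta_j u\|_{L^q}$, and combining with the previous estimate gives $\|\Delta_j u\|_{L^\infty}\lesssim 2^{j(n/q-s)}\|u\|_{\dot{H}^{s}_q}$. I then use $s=s_1$ on the low frequencies $j\le 0$, where $n/q-s_1>0$ makes $\sum_{j\le 0}2^{j(n/q-s_1)}$ a convergent geometric series, and $s=s_2$ on the high frequencies $j\ge 1$, where $n/q-s_2<0$ makes $\sum_{j\ge 1}2^{j(n/q-s_2)}$ convergent. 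Summing gives
$$\|u\|_{L^\infty}\le\sum_{j\in\Z}\|\Delta_j u\|_{L^\infty}\lesssim\Big(\sum_{j\le 0}2^{j(n/q-s_1)}\Big)\|u\|_{\dot{H}^{s_1}_q}+\Big(\sum_{j\ge 1}2^{j(n/q-s_2)}\Big)\|u\|_{\dot{H}^{s_2}_q}\lesssim\|u\|_{\dot{H}^{s_1}_q}+\|u\|_{\dot{H}^{s_2}_q}.$$

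The only point that needs care is justifying that $\sum_j\Delta_j u$ reconstructs $u$ as a genuine bounded continuous function rather than merely modulo a polynomial; this is precisely where the two-sided condition $s_1<n/q<s_2$ enters, since it forces the series to converge absolutely in $L^\infty$. I would dispose of this by first proving the inequality for Schwartz functions, where the reconstruction is classical, and then extending by density using the a priori bound itself. I expect this density/convergence step to be the only mildly nontrivial part; everything else is routine Littlewood--Paley calculus, the key being simply to select the dyadic exponent $s_1$ below the threshold and $s_2$ above it.
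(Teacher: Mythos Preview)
Your argument is correct. Note, however, that the paper does not give its own proof of this proposition: it is listed in the appendix as a standard harmonic-analysis tool and simply referred to Corollary~A.2 of \cite{Dao2019}. Your Littlewood--Paley/Bernstein approach is the standard way to obtain such an embedding; the splitting at $j=0$ with exponent $s_1$ below and $s_2$ above the threshold $n/q$ is exactly what makes both geometric series converge, and your remark about absolute $L^\infty$ convergence disposing of the modulo-polynomial ambiguity is the right way to close the density step.
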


\begin{proposition}[Fractional chain rule, see \cite{Palmieri2018}]\label{chainrule}
    Let $s >0$, $p > \lceil s \rceil$ and $1 < q, q_1, q_2 < \infty$ satisfying the relation $1/q = (p-1)/q_1 + 1/q_2.$ Let us denote by $F(u)$ one of the functions $|u|^p,\, \pm |u|^{p-1}u$. Then, it holds
    \begin{align*}
        \| F(u)\|_{\dot{H}_q^s} \lesssim \|u\|_{L^{q_1}}^{p-1} \| u\|_{\dot{H}^s_{q_2}}.
    \end{align*}
\end{proposition}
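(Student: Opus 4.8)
The plan is to reduce the estimate to its base case $0<s<1$ and then bootstrap to arbitrary $s>0$ by induction on $\lceil s\rceil$. The role of the hypothesis $p>\lceil s\rceil$ is exactly to guarantee that, writing $F(\tau)=|\tau|^p$ or $\pm|\tau|^{p-1}\tau$, the function $F$ is $C^{\lceil s\rceil}$ with the pointwise growth $|F^{(k)}(\tau)|\lesssim|\tau|^{p-k}$ for every $0\le k\le\lceil s\rceil$; this controlled growth is what feeds the $L^{r_1}$-norms on the right, since $\|\,|u|^{p-k}\|_{L^q}=\|u\|_{L^{(p-k)q}}^{p-k}$. A preliminary scaling check $u\mapsto\lambda u$ shows both sides are homogeneous of degree $p$, which is reassuring and pins down the exponent relation $\frac1r=\frac{p-1}{r_1}+\frac1{r_2}$.

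For the base case $0<s<1$, I would invoke (or reprove) the classical fractional chain rule of Christ--Weinstein type, namely $\|\,|\nabla|^s F(u)\|_{L^r}\lesssim\|F'(u)\|_{L^{a}}\,\|\,|\nabla|^s u\|_{L^{r_2}}$ with $\frac1r=\frac1a+\frac1{r_2}$, proved via the Riesz-potential representation of $|\nabla|^s$, a splitting of the difference $F(u(x))-F(u(y))$ into near and far regions, and Hardy--Littlewood maximal-function bounds on $L^r$. Choosing $a=r_1/(p-1)$ and using $|F'(u)|\lesssim|u|^{p-1}$ gives $\|F'(u)\|_{L^{r_1/(p-1)}}\lesssim\|u\|_{L^{r_1}}^{p-1}$, which is exactly the claimed estimate; note that the relation $\frac{p-1}{r_1}=\frac1r-\frac1{r_2}<1$ automatically forces $r_1/(p-1)>1$, so $a$ is admissible.

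For the inductive step, write $s=m+\theta$ with $m\ge1$ an integer and $\theta\in(0,1]$. Using that the Riesz transforms are bounded on $L^r$ for $1<r<\infty$, I would reduce $\|F(u)\|_{\dot{H}^s_r}$ to $\sum_{|\alpha|=m}\|\partial^\alpha F(u)\|_{\dot{H}^\theta_r}$ (for integer $s$ one simply differentiates down to integer order and uses the ordinary Leibniz and chain rules). Each $\partial^\alpha F(u)$ is expanded by the Fa\`a di Bruno formula into a finite sum of terms $F^{(k)}(u)\prod_{i=1}^k\partial^{\beta_i}u$ with $\sum_i|\beta_i|=m$ and each $|\beta_i|\ge1$. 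To each term I would apply the fractional Leibniz (Kato--Ponce) rule to distribute the remaining $\theta$ derivatives over the $k+1$ factors; when the derivative lands on $F^{(k)}(u)$ I close the loop with the base case applied to $F^{(k)}$, whose derivative again obeys $|F^{(k+1)}(\tau)|\lesssim|\tau|^{p-k-1}$. This yields a finite collection of products of norms $\|\,|\nabla|^{\sigma_i}u\|_{L^{q_i}}$ with intermediate orders $0\le\sigma_i\le s$, times a zeroth-order factor $|F^{(k)}(u)|\lesssim|u|^{p-k-1}$.

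The decisive final step, which I expect to be the main obstacle, is to collapse every intermediate factor back onto the two endpoints $\|u\|_{L^{r_1}}$ and $\|u\|_{\dot{H}^s_{r_2}}$. To each derivative factor $\|\,|\nabla|^{\sigma_i}u\|_{L^{q_i}}$, with $\sum_i\sigma_i=s$, I would apply the fractional Gagliardo--Nirenberg inequality of Proposition \ref{fractionalGagliardoNirenberg}, choosing the H\"older exponents $q_i$ so that each interpolation runs at its lower endpoint $\omega_i=\sigma_i/s$. Since $\sum_i\sigma_i=s$ this forces $\sum_i\omega_i=1$, so after collecting the factor $|u|^{p-k-1}$ the homogeneous norm $\|u\|_{\dot{H}^s_{r_2}}$ appears to the first power and $\|u\|_{L^{r_1}}$ to the power exactly $p-1$, matching the claim. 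The genuine difficulty is entirely one of admissibility: one must check that the exponents $q_i$ produced by forcing $\omega_i=\sigma_i/s$ all lie strictly in $(1,\infty)$, that the lower-endpoint condition $\omega_i\ge\sigma_i/s$ of Proposition \ref{fractionalGagliardoNirenberg} is met, and that the H\"older sum of reciprocals reproduces $\frac1r=\frac{p-1}{r_1}+\frac1{r_2}$. Each of these is enforced by the degree-$p$ homogeneity together with $p>\lceil s\rceil$, which keeps $p-k-1>0$ and all intermediate orders below the top order $s$; verifying it simultaneously for every term of the Fa\`a di Bruno expansion is the technical heart of the argument.
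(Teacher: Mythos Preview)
The paper does not prove this proposition; it appears in the appendix as a cited tool, attributed to Theorem~1.5 of \cite{Palmieri2018}, with no argument supplied. There is therefore nothing in the paper to compare your proposal against.

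That said, your outline---the base case $0<s<1$ via the Christ--Weinstein chain rule, then induction on $\lceil s\rceil$ through Fa\`a di Bruno, the Kato--Ponce fractional Leibniz rule, and Gagliardo--Nirenberg interpolation to collapse the intermediate norms---is essentially the strategy followed in the cited reference and its predecessors, and your identification of the exponent-admissibility bookkeeping as the technical crux is accurate. One small reassurance on that front: forcing $\omega_i=\sigma_i/s$ in Proposition~\ref{fractionalGagliardoNirenberg} makes $1/q_i=(1-\sigma_i/s)\,r_1^{-1}+(\sigma_i/s)\,r_2^{-1}$, a genuine convex combination of $1/r_1$ and $1/r_2$, so $q_i\in(1,\infty)$ is automatic and the H\"older reconstruction of $1/r$ follows from $\sum_i\sigma_i=s$ together with the leftover factor $|u|^{p-k}$ placed in $L^{r_1/(p-k)}$.
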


\section*{Acknowledgements}
This work is supported by Vietnam Ministry of Education and Training
and Vietnam Institute for Advanced Study in Mathematics under grant number B2026-CTT-04.  The authors would like to thank Prof. Wenhui Chen (Guangzhou University) and Prof. Ryo Ikehata (Hiroshima University) for their helpful advice in the preparation of this
paper.

\end{document}